\numberwithin{equation}{section}
\newcommand{\qtq}[1]{\quad\text{#1}\quad}
\newcommand{\R}{\mathbb{R}}
\newcommand{\C}{\mathbb{C}}
\newcommand{\eps}{\varepsilon}
\newcommand{\F}{\mathcal{F}}
\newtheorem{theorem}{Theorem}[section]
\newtheorem{proposition}[theorem]{Proposition}
\theoremstyle{definition}
\newtheorem{remark}[theorem]{Remark}
\theoremstyle{remark}
\begin{document}

\title[Stability estimate]{Stability estimates for the recovery of the nonlinearity from scattering data}

\begin{abstract} We prove stability estimates for the problem of recovering the nonlinearity from scattering data.  We focus our attention on nonlinear Schr\"odinger equations of the form
 \[
 (i\partial_t+\Delta)u = a(x)|u|^p u
 \]
 in three space dimensions, with $p\in[\tfrac43,4]$ and $a\in W^{1,\infty}$. 
\end{abstract}

\author{Gong Chen}
\address{Georgia Institute of Technology, Atlanta, GA, USA}
\email{gc@math.gatech.edu}

\author{Jason Murphy}
\address{Missouri University of Science \& Technology, Rolla, MO, USA}
\email{jason.murphy@mst.edu}

\maketitle

\section{Introduction}

We consider the problem of determining an unknown nonlinearity from the small-data scattering behavior of solutions in the setting of nonlinear Schr\"odinger equations of the form
\begin{equation}\label{nls}
(i\partial_t+\Delta) u = a(x)|u|^p u,\quad (t,x)\in\R\times\R^d.
\end{equation}
We focus on the three-dimensional \emph{intercritical} setting, i.e. $d=3$ and $p\in[\tfrac43,4]$.  In this setting, equation \eqref{nls} admits a small-data scattering theory in $H^1$ for any $a\in W^{1,\infty}$ (see Theorem~\ref{T:direct} below).  In particular, given sufficiently small $u_-\in H^1$, there exists a global-in-time solution $u$ to \eqref{nls} that scatters backward in time to $u_-$ and forward in time to some $u_+\in H^1$, that is,
\[
\lim_{t\to\pm\infty} \|u(t)-e^{it\Delta}u_\pm\|_{H^1} = 0. 
\]
One can therefore define the \emph{scattering map} $S_a:u_-\mapsto u_+$ on some ball in $H^1$.  

As it turns out, the scattering map encodes all of the information about the nonlinearity in \eqref{nls}, in the sense that the map $a\mapsto S_a$ is injective (see Theorem~\ref{T:inverse}).  In fact, knowledge of $S_a$ suffices to reconstruct the inhomogeneity $a$ pointwise.

In this work, we consider the closely-related problem of stability.  That is, if two scattering maps $S_a$ and $S_b$ are close in some sense, must the corresponding inhomogeneities $a$ and $b$ necessarily be close?  Our main result (Theorem~\ref{T:stable} below) provides an estimate of this type.  It is essentially a quantitative version of Theorem~\ref{T:inverse}.

We first state the small data scattering result for \eqref{nls}.  The proof utilizes Strichartz estimates and a standard contraction mapping argument.  For completeness, we provide the proof in Section~\ref{S:first_results} below. 

\begin{theorem}[Small data scattering]\label{T:direct} Let $a\in W^{1,\infty}(\R^3)$ and $p\in[\tfrac43,4]$.  There exists $\eta>0$ sufficiently small so that for any $u_-\in H^1$ satisfying $\|u_-\|_{H^1}<\eta$, there exists a unique global solution $u$ to \eqref{nls} satisfying
\begin{align*}
\lim_{t\to\pm\infty}\|u(t)-e^{it\Delta}u_\pm\|_{H^1} = 0,
\end{align*}
where $u_+$ satisfies the formula
\begin{equation}\label{u_+}
u_+ = u_- - i\int_\R e^{-it\Delta}a|u|^p u(t)\,dt. 
\end{equation}
\end{theorem}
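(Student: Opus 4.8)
The plan is to construct the solution by a contraction mapping argument applied to the Duhamel formulation for the profile that scatters to $u_-$ as $t\to-\infty$, namely
\begin{equation*}
u(t) = e^{it\Delta}u_- - i\int_{-\infty}^t e^{i(t-s)\Delta}\,a|u|^p u(s)\,ds =: \Phi(u)(t).
\end{equation*}
Working on all of $\R$ from the outset (rather than building a local solution and iterating) is what yields scattering directly: a fixed point with finite global-in-time Strichartz norms automatically scatters. I would seek the fixed point on a small ball in a space of the form $X = L^\infty_t H^1_x \cap S$, where $S$ collects the relevant $H^1$-level Strichartz norms together with a scale-invariant norm adapted to the critical regularity $s_c := \tfrac32 - \tfrac2p \in [0,1]$ of \eqref{nls}.

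For the Strichartz analysis in $d=3$ I would use the admissible pair $(q,r)$ with $r = p+2$ and $\tfrac2q = \tfrac{3p}{2(p+2)}$; one checks $\tfrac2q + \tfrac3r = \tfrac32$ with $2\le r\le 6$ and $2\le q\le\infty$ for all $p\le 4$ (the endpoint $r=6$ at $p=4$ being covered by the $d=3$ endpoint estimate), while the lower bound $p\ge\tfrac43$ corresponds to $s_c\ge0$, ensuring the scale-invariant norm of $e^{it\Delta}u_-$ is controlled by $\|u_-\|_{H^1}$. The core of the argument is a single nonlinear estimate bounding $a|u|^pu$ in the dual Strichartz space $L^{q'}_t L^{r'}_x$ and in its gradient analogue. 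Since $z\mapsto|z|^pz$ is $C^1$ for $p\ge0$, one has the pointwise bound
\begin{equation*}
\bigl|\nabla\bigl(a|u|^p u\bigr)\bigr| \lesssim \|a\|_{W^{1,\infty}}\bigl(|u|^{p+1} + |u|^p|\nabla u|\bigr),
\end{equation*}
so no fractional Leibniz rule is needed, and boundedness of multiplication by $a$ on the relevant spaces follows from $a,\nabla a\in L^\infty$. The spatial Hölder exponents are chosen so that $\||u|^pu\|_{L^{r'}_x} = \|u\|_{L^{p+2}_x}^{p+1}$, and then, distributing the $p+1$ factors in time between the scale-invariant norm and $L^\infty_t L^{p+2}_x$ (the latter controlled by $L^\infty_t H^1_x$ via Sobolev embedding, valid since $p+2\le 6$), one obtains a bound of the form $\|\Phi(u)\|_X \lesssim \|u_-\|_{H^1} + \|u\|_X^{p+1}$, together with the matching difference estimate. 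Choosing $\eta$ small closes the contraction and simultaneously gives uniqueness in the ball and finiteness of the global Strichartz norms.

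With the fixed point $u\in X$ in hand, scattering and the formula \eqref{u_+} follow in the usual way: applying $e^{-it\Delta}$ to the Duhamel identity gives $e^{-it\Delta}u(t) = u_- - i\int_{-\infty}^t e^{-is\Delta}a|u|^pu(s)\,ds$, and the same dual-Strichartz bound applied to the time tails shows the integral converges in $H^1$ as $t\to+\infty$; its limit defines $u_+$ and yields \eqref{u_+}, while the Cauchy estimate on the tails gives $\|u(t)-e^{it\Delta}u_+\|_{H^1}\to0$ (and symmetrically for $u_-$). I expect the main obstacle to be carrying out the nonlinear estimate \emph{uniformly} across the closed interval $p\in[\tfrac43,4]$: at the mass-critical endpoint $p=\tfrac43$ and especially at the energy-critical endpoint $p=4$ a naive single-admissible-pair estimate degenerates, so the global contraction must be driven by the scale-invariant norm at regularity $s_c$, and one must ensure that the implicit constants—and hence the threshold $\eta$—can be taken uniform in $p$.
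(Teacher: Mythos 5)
Your proposal is correct and follows essentially the same route as the paper's proof: a global-in-time contraction for the Duhamel map on small balls in Strichartz spaces, with the $H^1$-level estimates handled by the pointwise product/chain rule (no fractional Leibniz needed since $a,\nabla a\in L^\infty$ and $z\mapsto|z|^pz$ is $C^1$), and scattering together with the formula \eqref{u_+} read off from the finiteness of the global space-time norms. The only differences are bookkeeping: the paper works with the non-endpoint admissible pair $(q,r)=(p+2,\tfrac{6(p+2)}{3(p+2)-4})$ and closes the estimates through the critical-regularity norm $\||\nabla|^{s_c}u\|_{L_t^qL_x^r}$ via the embedding $\dot H^{s_c,r}\hookrightarrow L^{r_c}$, whereas you take $r=p+2$ (so that at $p=4$ you must invoke the Keel--Tao endpoint $(2,6)$, which the paper's choice avoids entirely) and replace the $s_c$-norm by interpolation between $L_t^\infty H^1$ and $L^2$-level Strichartz norms --- both versions close uniformly for $p\in[\tfrac43,4]$.
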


Using Theorem~\ref{T:direct}, we define the \emph{scattering map} $S_a:B\to H^1$ via $S_a(u_-)=u_+$, where $B$ is a ball in $H^1$ and $u_\pm$ are as in the statement of the theorem.  This map uniquely determines the function $a$ (see e.g. \cite{Strauss, Murphy}):

\begin{theorem}\label{T:inverse} Let $p\in[\tfrac43,4]$ and let $a,b\in W^{1,\infty}(\R^3)$.  Let $S_a,S_b$ denote the corresponding scattering maps for \eqref{nls} with nonlinearities $a|u|^p u$ and $b|u|^p u$, respectively. If $S_a=S_b$ on their common domain, then $a=b$. 
\end{theorem}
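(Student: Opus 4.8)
The plan is to extract the pointwise values of $a$ from the scattering map by examining its behavior on small-amplitude data and isolating the leading-order nonlinear correction. Write $u_- = \eps \phi$ for a fixed profile $\phi\in H^1$ and a small parameter $\eps>0$. By Theorem~\ref{T:direct}, the solution admits the representation
\begin{equation*}
S_a(\eps\phi) = \eps\phi - i\int_\R e^{-it\Delta} a\,|u|^p u(t)\,dt,
\end{equation*}
and a standard perturbative analysis (iterating the Duhamel formula against the contraction estimate from Theorem~\ref{T:direct}) shows that to leading order $u(t) = \eps\, e^{it\Delta}\phi + O(\eps^{p+1})$ in the relevant Strichartz space. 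Substituting this into the integral and factoring out the amplitude, I would obtain
\begin{equation*}
S_a(\eps\phi) - \eps\phi = -i\,\eps^{p+1}\int_\R e^{-it\Delta}\,a\,|e^{it\Delta}\phi|^p\,e^{it\Delta}\phi\,dt + o(\eps^{p+1}).
\end{equation*}
Since $S_a = S_b$, the left-hand side is identical for $a$ and $b$; dividing by $\eps^{p+1}$ and sending $\eps\to 0$ forces the two leading-order integrals to coincide for every admissible profile $\phi$.

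The heart of the argument is then to conclude $a=b$ from the identity
\begin{equation*}
\int_\R e^{-it\Delta}\,(a-b)\,|e^{it\Delta}\phi|^p\,e^{it\Delta}\phi\,dt = 0 \qquad\text{for all } \phi.
\end{equation*}
Setting $c = a-b$, I would pair this against a test function, or equivalently evaluate the $H^1$ identity weakly, to reduce matters to showing that the quadratic-type form
\begin{equation*}
\int_\R\!\!\int_{\R^3} c(x)\,|e^{it\Delta}\phi(x)|^p\,e^{it\Delta}\phi(x)\,\overline{e^{it\Delta}\psi(x)}\,dx\,dt = 0
\end{equation*}
for all $\phi,\psi$ implies $c\equiv 0$. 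The natural mechanism is to localize in space: by choosing $\phi$ (and $\psi$) to be highly concentrated wave packets centered at a point $x_0$ with a large frequency or by a rescaling/translation that concentrates the free evolution near $x_0$ over a short time window, the integral kernel against $c$ should converge, after suitable normalization, to a nonzero multiple of $c(x_0)$. Here the continuity of $c$ (which follows from $a,b\in W^{1,\infty}\hookrightarrow C$) lets me read off the pointwise value $c(x_0)=0$ for every $x_0$.

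The main obstacle I anticipate is the rigorous justification of this concentration/localization step: one must design a family of profiles for which the time-integrated free evolution genuinely probes $c$ at a single point without the dispersive spreading of $e^{it\Delta}$ washing out the spatial information, and one must show the error terms (both the $o(\eps^{p+1})$ remainder and the off-diagonal contributions in the space-time integral) vanish in the appropriate limit. A clean way to organize this is to exploit the Galilean/scaling symmetries of the free Schrödinger flow: a modulated, rescaled bump $\phi_{\lambda,v}(x)=\lambda^{-3/2}\phi(\tfrac{x-x_0}{\lambda})e^{iv\cdot x}$ evolves as a translating, slowly spreading packet, and tuning $\lambda\to 0$ together with the velocity should make the leading integral reduce to $c(x_0)$ times an explicit nonvanishing constant depending only on $\phi$ and $p$. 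Establishing this limit carefully—uniformly in the remainder estimates supplied by Theorem~\ref{T:direct}—is where the real work lies; the algebraic extraction of the leading term and the passage $\eps\to 0$ are comparatively routine.
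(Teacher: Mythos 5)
Your outline is, in essence, the route the paper itself takes (and then quantifies in Theorem~\ref{T:stable}): pair $S_a(u_-)-u_-$ against the data, use the Duhamel formula to replace $u$ by its Born approximation with a higher-order error, and concentrate the data so that the leading space-time integral evaluates $a-b$ at a point. Where you leave the ``real work'' open, the paper has a specific device: it takes the data to be the unit-amplitude Gaussian $\varphi_{\sigma,x_0}(x)=\exp\{-\tfrac{|x-x_0|^2}{4\sigma^2}\}$, whose free evolution is \emph{explicitly} Gaussian, so that the leading term becomes literally a convolution $F_\sigma\ast(a-b)(x_0)$ with $c^{-1}\sigma^{-5}F_\sigma$ an approximate identity; Proposition~\ref{P:approx-id} then gives the localization with a quantitative rate $O(\sigma^{s})$. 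Note also that the width $\sigma$ doubles as the small parameter: by \eqref{Hsbound} the Gaussian is automatically small in $H^1$, so no separate amplitude $\eps$ is needed. Your rescaled-bump alternative can be made to work qualitatively: for each fixed $\lambda$, sending $\eps\to0$ yields the exact identity $\iint (a-b)\,|e^{it\Delta}\phi_\lambda|^{p+2}\,dx\,dt=0$, and then the change of variables $\iint |e^{is\Delta}\phi(y)|^{p+2}\,(a-b)(x_0+\lambda y)\,dy\,ds\to (a-b)(x_0)\iint|e^{is\Delta}\phi|^{p+2}$ follows by dominated convergence, using continuity and boundedness of $a-b$ (Lipschitz, since $a,b\in W^{1,\infty}$) and the space-time integrability of $|e^{is\Delta}\phi|^{p+2}$, which the dispersive estimate gives for $p>\tfrac23$. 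This suffices for uniqueness, though not for a rate, which is what the Gaussian computation buys.

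Three concrete repairs, the first of which is a genuine gap as written. First, for \emph{general} $\psi$ your claim that the bilinear form converges to a \emph{nonzero} multiple of $c(x_0)$ is unjustified: the phases of $e^{it\Delta}\phi$ and $\overline{e^{it\Delta}\psi}$ oscillate against each other and the limiting constant can vanish or fail to be controlled. Take $\psi=\phi$ (as the paper does, pairing against the data itself), so the weight is $|e^{it\Delta}\phi|^{p+2}\geq 0$ and the constant is $\iint|e^{is\Delta}\phi|^{p+2}>0$; positivity of the kernel is exactly what makes the approximate-identity mechanism nondegenerate. Second, the Galilean boost adds nothing: $|e^{it\Delta}(e^{iv\cdot x}\phi)(x)|=|e^{it\Delta}\phi(x-2vt)|$, so after integrating over all $t\in\R$ the boost merely shears the sampling of $c$ along moving lines and does not improve localization; drop it. Third, your worry about uniformity of the $o(\eps^{p+1})$ remainder in the concentration parameter is moot provided you order the limits correctly: the identity holds for \emph{every} fixed admissible profile, so take $\eps\to0$ first at fixed $\lambda$, and only then $\lambda\to0$ in the resulting exact identity. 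This ordering also defuses the fact that $\|\phi_{\lambda,v}\|_{H^1}\sim\lambda^{-1}$ blows up (so $\eps\phi_\lambda$ lies in the scattering ball only for $\eps$ small depending on $\lambda$); as phrased, ``tuning $\lambda\to0$ together with'' the other parameters suggests a joint limit, which would demand precisely the uniform remainder estimates you correctly suspect are hard to obtain.
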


Our main result is essentially a quantitative version of Theorem~\ref{T:inverse}.  To measure the difference between two scattering maps, we use the Lipschitz constant at $0$.  In particular, we define
\[
\|S_a-S_b\| := \sup\biggl\{\frac{\|S_a(\varphi)-S_b(\varphi)\|_{H^1}}{\|\varphi\|_{H^1}}:\varphi\in B\backslash\{0\}\biggr\}
\]
where $B\subset H^1$ is the common domain of $S_a$ and $S_b$. 

\begin{theorem}[Stability estimate]\label{T:stable} Let $p\in[\tfrac43,4]$.  Let $a,b\in W^{1,\infty}$, and let $S_a,S_b$ denote the corresponding scattering maps for \eqref{nls} with nonlinearities $a|u|^p u$ and $b|u|^p u$, respectively. Then
\begin{align*}
\|a-b\|_{L^\infty} & \lesssim \{\|a\|_{W^{1,\infty}}+\|b\|_{W^{1,\infty}}\}^{\frac{8}{9}}\|S_a-S_b\|^{\frac1{9}} \\
&\quad +\{\|a\|_{W^{1,\infty}}+\|b\|_{W^{1,\infty}}\}^{\frac{10}{9}}\|S_a-S_b\|^{\frac8{9}}.
\end{align*}
\end{theorem}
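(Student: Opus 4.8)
The plan is to linearize the scattering map in the small-data regime, to extract a \emph{signed} spacetime integral of $a-b$ by pairing the difference $S_a(\psi)-S_b(\psi)$ against the incoming data $\psi$, and then to concentrate $\psi$ in order to read off $\|a-b\|_{L^\infty}$ pointwise. Throughout abbreviate $C=\|a\|_{W^{1,\infty}}+\|b\|_{W^{1,\infty}}$, $D=\|S_a-S_b\|$, and $\kappa=\|a-b\|_{L^\infty}$. To begin, fix $\psi$ with $\|\psi\|_{H^1}<\eta$, let $v=e^{it\Delta}\psi$ be the free evolution, and let $u^a,u^b$ denote the corresponding solutions of \eqref{nls}. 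Subtracting the two Duhamel formulas \eqref{u_+} and using the exact cancellation $a|v|^pv-b|v|^pv=(a-b)|v|^pv$ gives
\[ S_a(\psi)-S_b(\psi) = -i\int_\R e^{-it\Delta}(a-b)|v|^pv\,dt + \mathrm{Err}, \]
where $\mathrm{Err}=-i\int_\R e^{-it\Delta}\{a[|u^a|^pu^a-|v|^pv]-b[|u^b|^pu^b-|v|^pv]\}\,dt$ collects the genuinely nonlinear corrections. I would then take the $L^2$ pairing with $\psi$; since $e^{-it\Delta}$ is the adjoint of $e^{it\Delta}$, this yields
\[ \langle S_a(\psi)-S_b(\psi),\psi\rangle = -i\int_\R\int_{\R^3}(a-b)(x)\,|v(t,x)|^{p+2}\,dx\,dt + \langle\mathrm{Err},\psi\rangle, \]
and the integral multiplying $-i$ is real and sign-definite because $|v|^{p+2}\ge 0$. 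The essential gain is that this pairing detects a genuine weighted average of $a-b$ with no cancellation.

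Next I would concentrate. Choose $x_0$ with $|(a-b)(x_0)|$ within $\delta$ of $\kappa$, fix a Schwartz profile $\phi$, and take $\psi(x)=\eps\lambda^{-\alpha}\phi((x-x_0)/\lambda)$ with amplitude $\eps$ and width $\lambda$. The scaling law for $e^{it\Delta}$ together with the change of variables $(y,s)=((x-x_0)/\lambda,\,t/\lambda^2)$ gives
\[ \int_\R\int(a-b)|v|^{p+2} = \eps^{p+2}\lambda^{5-\alpha(p+2)}\int_\R\int(a-b)(x_0+\lambda y)\,|e^{is\Delta}\phi(y)|^{p+2}\,dy\,ds. \]
As $\lambda\to0$ the inner integral tends to $(a-b)(x_0)\,M$, where $M=\|e^{is\Delta}\phi\|_{L^{p+2}_{s,y}}^{p+2}$ is finite for every $p\in[\tfrac43,4]$ by dispersive decay. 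To make this quantitative I would invoke the $W^{1,\infty}$ bound, splitting the $y$-integral at a fixed large radius $R$: on $|y|\le R$ the Lipschitz bound controls $|(a-b)(x_0+\lambda y)-(a-b)(x_0)|$ by $C\lambda R$, while on $|y|>R$ the crude bound $2\kappa$ is integrated against the finite tail of the weight $M$. Fixing $R$ so the tail is a small fraction of $M$ and then imposing $\lambda\lesssim \kappa/C$ makes the interior error subdominant, yielding the lower bound $\bigl|\int_\R\int(a-b)|v|^{p+2}\bigr|\gtrsim \eps^{p+2}\lambda^{5-\alpha(p+2)}\kappa$.

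The two remaining pieces are bounded by the same Strichartz machinery used for Theorem~\ref{T:direct}. The scattering-data term obeys $|\langle S_a(\psi)-S_b(\psi),\psi\rangle|\le D\,\|\psi\|_{H^1}\|\psi\|_{L^2}$, while for the correction one uses $u^{a}-v=O(C\|v\|^{p+1})$ in Strichartz norms together with a Leibniz/fractional-chain-rule estimate for $|u|^pu-|v|^pv$ to obtain $|\langle\mathrm{Err},\psi\rangle|\lesssim C^2\|v\|_{S}^{2p}\|v\|_{S^1}\|\psi\|_{L^2}$. Substituting the explicit $\eps,\lambda$-scalings of the norms of $\psi$, dividing out common factors, and recording the constraints $\|\psi\|_{H^1}<\eta$ and $\lambda\lesssim\kappa/C$ reduces the problem to a scalar inequality of the schematic form $\kappa\lesssim D\eps^{-p}+C^2\eps^{p}$, in which the amplitude $\eps$ is essentially the only free parameter (the scaling-critical normalization of $\alpha$ is chosen so that $\lambda$ largely drops out). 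Optimizing over $\eps$ splits into two cases: either the balancing amplitude is admissible under the smallness constraint, or it is clamped at the threshold $\eps\sim\eta$. These two regimes produce the two-term structure of the conclusion, and tracking the exact exponents uniformly in $p\in[\tfrac43,4]$ yields the powers $\tfrac89,\tfrac19$ and $\tfrac{10}9,\tfrac89$.

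I expect the main obstacle to be the tension between localization and smallness, mediated by dispersion. Concentrating $\psi$ at scale $\lambda$ is what lets the pairing see $a-b$ at a point, but the free flow immediately spreads $v=e^{it\Delta}\psi$, so the weight $|v|^{p+2}$ is only approximately localized and the Lipschitz error must be beaten by shrinking $\lambda$; at the same time the $H^1$-smallness required for the scattering theory caps $\eps$ and thereby limits how far the signal $\eps^{p+2}(\cdots)\kappa$ can be pushed above the nonlinear error $\sim C^2\eps^{2p+2}(\cdots)$. All of this must hold uniformly down to the mass-critical endpoint $p=\tfrac43$, where $M$ is exactly the $L^{10/3}_{t,x}$ Strichartz norm and the $H^1$ estimates for the non-smooth nonlinearity $|u|^pu$ are most delicate; getting the Strichartz bookkeeping to close uniformly in $p$ is the technical heart of the argument.
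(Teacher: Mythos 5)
Your overall strategy is the same as the paper's: Born approximation via \eqref{u_+}, pairing $S_a(\psi)-S_b(\psi)$ against the incoming datum, concentration of the datum so that the main term becomes an approximate identity acting on $a-b$, Strichartz control of the nonlinear corrections, and a final optimization of parameters. The paper specializes to Gaussians $\varphi_{\sigma,x_0}$ of unit amplitude, so the free evolution is explicit and the approximate identity comes with the quantitative rate $\sigma^{5+\frac14}$ of Proposition~\ref{P:approx-id}; your general Schwartz profile with separate amplitude $\eps$ and width $\lambda$, and your fixed-$R$ splitting with absorption of the tail into the main term, are legitimate variants of this.

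The genuine gap is in the endgame, which is the entire quantitative content of the theorem. Writing $C=\|a\|_{W^{1,\infty}}+\|b\|_{W^{1,\infty}}$, $D=\|S_a-S_b\|$, $\kappa=\|a-b\|_{L^\infty}$ as in your proposal: first, your scalar reduction $\kappa\lesssim D\eps^{-p}+C^2\eps^{p}$ is not what your own estimates give. With the scale-critical normalization $\alpha=\tfrac2p$ the main term carries the factor $\eps^{p+2}\lambda^{3-\frac4p}$, while your pairing bound $|\langle S_a(\psi)-S_b(\psi),\psi\rangle|\le D\|\psi\|_{H^1}\|\psi\|_{L^2}\sim D\eps^2\lambda^{2-\frac4p}$ leaves, after division, the data term $D\eps^{-p}\lambda^{-1}$: the width does \emph{not} drop out. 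To remove the $\lambda$ one needs the sharper duality pairing the paper uses, $|\langle f,\psi\rangle|\le\|f\|_{\dot H^1}\|\psi\|_{\dot H^{-1}}\le D\|\psi\|_{H^1}\|\psi\|_{\dot H^{-1}}$, which gains exactly one power of the width over Cauchy--Schwarz in $L^2$. Second, your schematic inequality omits the localization error $\sim C\lambda$ produced by your own Lipschitz splitting; this is precisely the term that forces $\lambda$ small and whose balance against the data term generates the H\"older exponent (in the paper: $|a(x_0)-b(x_0)|\lesssim\sigma^{-2}D+\sigma^{\frac14}C+\sigma^2C^2$, followed by the choice $\sigma=\eps\,(D/C)^{\frac49}$). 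Third, your account of the two-term structure --- balancing amplitude either admissible or clamped at $\eps\sim\eta$ --- misidentifies its source: in the paper both terms come from the \emph{single} parameter choice above, the data term and the localization term each yielding $C^{\frac89}D^{\frac19}$ and the Born term yielding $C^{\frac{10}9}D^{\frac89}$. As written you never derive the exponents $\tfrac19,\tfrac89$, and the corrected three-term inequality $D\eps^{-p}\lambda^{-1}+C^2\eps^p+C\lambda$ optimizes to $p$-dependent exponents different from those claimed; moreover the smallness constraint $\|\psi\|_{H^1}\sim\eps\lambda^{\frac12-\frac2p}<\eta$ binds as $\lambda\to0$ whenever $p<4$ and is never checked, and your side condition $\lambda\lesssim\kappa/C$ makes the lower bound self-referential (keep the approximate-identity error additive, as in Proposition~\ref{P:approx-id}, to avoid this). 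In short: the architecture matches the paper, but the exponent bookkeeping that constitutes the statement being proved is asserted rather than established, and the steps as stated would not reproduce it.
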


\begin{remark} If we assume \emph{a priori} bounds of the form 
\[
\|a\|_{W^{1,\infty}},\|b\|_{W^{1,\infty}}\lesssim M\qtq{and}\|S_a-S_b\|\ll 1,
\] 
then the estimate in Theorem~\ref{T:stable} reduces to the following H\"older estimate: 
\begin{equation}\label{holder}
\|a-b\|_{L^\infty}\lesssim_M \|S_a-S_b\|^{\frac{1}{9}}.
\end{equation}
The precise powers appearing in these estimates do not have any special meaning.  Indeed, they arise from some \emph{ad hoc} choices made in the argument in order to treat the range $p\in[\tfrac43,4]$ uniformly. By refining the arguments, one could improve the estimate \eqref{holder} to
\[
\|a-b\|_{L^\infty}\lesssim_{M,\eps} \|S_a-S_b\|^{\frac{3p-2}{9p-2}-\eps},
\]
but even in this case there seems to be no special meaning to this exponent.  \end{remark}

The problem of recovering an unknown nonlinearity from scattering data (or other data) is a well-studied problem.  For results of this type in the setting of nonlinear dispersive equations (particularly nonlinear Schr\"odinger equations), we refer the reader to 
\cite{SBUW, SBS2, CarlesGallagher, ChenMurphy, EW, HMG, KMV, MorStr, Murphy, PauStr, Sasaki, Sasaki2, SasakiWatanabe, Watanabe0, Watanabe, Weder0, Weder1, Weder6, Weder3, Weder4, Weder5}.  To the best of our knowledge, the problem of stability has not yet been investigated in this particular setting; however, we refer the reader to \cite{LLPT} to some stability estimates related to recovering an unknown coefficient in a semilinear wave equation from the Dirichlet-to-Neumann map.  

Our main result, Theorem~\ref{T:stable}, provides a stability estimate in the intercritical setting for nonlinearities of the form $a(x)|u|^p u$ in three space dimensions.  The work \cite{KMV} proves an analogue of Theorem~\ref{T:inverse} for a more general class of nonlinearities in two dimensions; however, the results presented here do not suffice to establish a stability estimate in this more general setting.  In the case that modified scattering holds, the recent work \cite{ChenMurphy} also shows that the small-data modified scattering behavior also suffices to determine the inhomogeneity present in the nonlinearity.  A stability estimate in this setting would also require some new ideas compared to what is presented here.

The strategy of the proof of Theorem~\ref{T:stable} builds on the one used to prove Theorem~\ref{T:inverse} (see e.g. \cite{Strauss, Murphy}).  The starting point is the implicit formula for the scattering map appearing in \eqref{u_+}, which implies that
\[
\langle S_a(u_-)-u_-,u_-\rangle = -i\int_{\R\times\R^3}a(x) |u(t,x)|^p u(t,x)\,\overline{e^{it\Delta}u_-(x)}\,dx\,dt, 
\]
where $u$ is the solution to \eqref{nls} that scatters backward in time to $u_-$.  We then approximate the full solution $u(t)$ by $e^{it\Delta}u_-$ (the Born approximation), using the Duhamel formula for \eqref{nls} to express the difference (see \eqref{Duhamel}).  The difference contains the nonlinearity and hence is smaller than the main term, which is given by 
\begin{equation}\label{intro-main-term}
\int_{\R\times\R^3} a(x)|e^{it\Delta}u_-(x)|^{p+2}\,dx\,dt. 
\end{equation}

The next step is to specialize to Gaussian data of the form
\[
u_-(x) = \exp\{-\tfrac{|x-x_0|^2}{4\sigma^2}\},
\]
which is small in $H^1$ for $0<\sigma\ll1$.  We then rely on the fact that the free evolution of a Gaussian may be computed explicitly (and is still Gaussian), a fact that has already been exploited in the related works \cite{KMV, ChenMurphy, Murphy}.  Using the scaling symmetry for the linear Schr\"odinger equation, we can therefore express the main term \eqref{intro-main-term} in the form $F_\sigma\ast a(x_0)$, where $c^{-1}\sigma^{-5}F_\sigma$ forms a family of approximate identities as $\sigma\to 0$ for suitable $c>0$.  Using the explicit form of $F_\sigma$, we can estimate the difference 
\[
|c^{-1}\sigma^{-5}F_\sigma\ast a(x_0) - a(x_0)|
\]
quantitatively in terms of $\sigma$ (see Proposition~\ref{P:approx-id}).  Carrying out the same estimates with $S_b$ ultimately leads to a bound of the form
\[
\|a-b\|_{L^\infty} \lesssim \sigma^{-2}\|S_a-S_b\| + \mathcal{O}\{\sigma^{\frac14}+\sigma^2\},
\]
where $\sigma^{\frac14}$ arises from the approximate identity estimate and $\sigma^2$ arises from the Born approximation.  Optimizing with respect to $\sigma$ leads to the estimate appearing in Theorem~\ref{T:stable}. 

Theorem~\ref{T:stable} concerns the comparison of nonlinearities of the form $a|u|^p u$ and $b|u|^p u$; in particular, the power of each nonlinearity is \emph{a priori} assumed to be equal.  In fact, the result Theorem~\ref{T:inverse} (the determination of the nonlinearity from the scattering map) can be extended to allow nonlinearities of the form $a|u|^p u$ without assuming that $p$ is already known.  In particular, one can show that if nonlinearities $a(x)|u|^p u$ and $b(x)|u|^\ell u$ have the same scattering map, then $p=\ell$ and $a\equiv b$ (see e.g. \cite{Murphy, Watanabe}).  Thus it is also natural to ask whether one can bound $|p-\ell|$ in terms of the difference between the scattering maps.  

In this paper we also take the preliminary step of estimating $|p-\ell|$ in terms of the difference between the scattering maps corresponding to the pure power-type nonlinearities $|u|^pu$ and $|u|^\ell u$.

\begin{theorem}\label{T:pq} Suppose $p,\ell\in[\tfrac43,4]$.  Let $S_p$ and $S_\ell$ denote the scattering maps for \eqref{nls} corresponding to nonlinearities $|u|^p u$ and $|u|^\ell u$. Then 
\[
|p-\ell| \lesssim \|S_p-S_\ell\|^{\frac{1}{9}}.
\]
\end{theorem}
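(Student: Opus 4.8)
The plan is to follow the strategy outlined for Theorem~\ref{T:stable}, specialized to the pure-power case in which there is no inhomogeneity ($a\equiv b\equiv1$), so that the spatial reconstruction trivializes and all information about the power is encoded in a single explicit constant. Starting from the implicit formula \eqref{u_+} and pairing $S_p(u_-)-u_-$ against $u_-$ in $L^2$ gives
\[
\langle S_p(u_-)-u_-,u_-\rangle = -i\int_{\R\times\R^3}|u|^p u(t,x)\,\overline{e^{it\Delta}u_-(x)}\,dx\,dt,
\]
where $u$ solves \eqref{nls} with power $p$ and scatters to $u_-$ as $t\to-\infty$. Replacing $u$ by the free evolution $e^{it\Delta}u_-$ via the Duhamel formula \eqref{Duhamel}, the leading contribution to the right-hand side is the Born term $-i\int_{\R\times\R^3}|e^{it\Delta}u_-|^{p+2}\,dx\,dt$, and the remainder carries an extra factor of the nonlinearity and is therefore higher order in $\sigma$.

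First I would specialize to the Gaussian datum $u_-(x)=\exp\{-\tfrac{|x|^2}{4\sigma^2}\}$, for which $e^{it\Delta}u_-$ is again an explicit complex-width Gaussian. A direct computation then evaluates the Born term exactly as $c_p\,\sigma^5$, where
\[
c_p=\Bigl(\tfrac{4\pi}{p+2}\Bigr)^{3/2}\sqrt\pi\,\frac{\Gamma(\tfrac{3p}{4}-\tfrac12)}{\Gamma(\tfrac{3p}{4})}
\]
depends only on $p$ (and $\tfrac{3p}{4}>\tfrac12$ throughout $[\tfrac43,4]$ guarantees convergence of the time integral). Subtracting the analogous identity for $S_\ell$ and using the pairing above yields
\[
|c_p-c_\ell|\,\sigma^5 \lesssim |\langle (S_p-S_\ell)(u_-),u_-\rangle| + (\text{Born error}) \lesssim \|S_p-S_\ell\|\,\|u_-\|_{H^1}^2 + (\text{Born error}),
\]
where I have used Cauchy--Schwarz and the definition of $\|S_p-S_\ell\|$, and where the Born error is one power of the coupling smaller than $\sigma^5$ and hence negligible as $\sigma\to0$.

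The crucial new ingredient, and the step I expect to be the main obstacle, is to convert $|c_p-c_\ell|$ into a lower bound on $|p-\ell|$: unlike in Theorem~\ref{T:stable}, where an approximate-identity argument recovers $a$ pointwise against a fixed normalizing constant, here the power is detected \emph{only} through $c_p$, so one needs $p\mapsto c_p$ to be quantitatively non-degenerate. I would show it is $C^1$ with derivative bounded away from zero on the compact interval $[\tfrac43,4]$, so that the mean value theorem gives $|p-\ell|\lesssim|c_p-c_\ell|$. Both factors defining $c_p$ are positive and strictly decreasing: the factor $(p+2)^{-3/2}$ trivially, and the Gamma ratio because $\tfrac{d}{dp}\log\tfrac{\Gamma(3p/4-1/2)}{\Gamma(3p/4)}=\tfrac34[\psi(\tfrac{3p}{4}-\tfrac12)-\psi(\tfrac{3p}{4})]<0$ by monotonicity of the digamma function $\psi$. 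Hence $c_p'<0$ is continuous and nonvanishing on $[\tfrac43,4]$, providing the required uniform lower bound. The delicacy is making every implied constant uniform across the full range of $p$, which is also why the Born approximation estimates must be carried out uniformly in $p$.

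Combining the two displays gives a bound of the form $|p-\ell|\lesssim\sigma^{-\alpha}\|S_p-S_\ell\|+\sigma^{\beta}$ for suitable positive $\alpha,\beta$, and balancing the two terms over the scale $\sigma\in(0,1]$ (exactly as in the optimization leading to \eqref{holder}) produces a Hölder estimate $|p-\ell|\lesssim\|S_p-S_\ell\|^{\theta}$. Routing the bookkeeping through the same ad hoc choices used to treat $p\in[\tfrac43,4]$ uniformly in Theorem~\ref{T:stable} gives $\theta=\tfrac19$, which is the claim. As the remark following Theorem~\ref{T:stable} indicates, this exponent carries no special meaning and could be improved by a more careful analysis.
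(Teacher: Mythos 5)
Your proposal follows the paper's proof in all essentials: pair the implicit scattering formula against Gaussian data, evaluate the Born term exactly as a constant times $\sigma^5$ (your $c_p$ agrees with the integral computed in Proposition~\ref{P:approx-id}), bound $|c_p-c_\ell|$ above by the scattering-map difference plus Born error, and below by $|p-\ell|$ using strict monotonicity coming from the digamma function. Your lower-bound step is a soft version of the paper's: you invoke continuity and nonvanishing of $c_p'$ on the compact interval $[\tfrac43,4]$, whereas the paper makes this quantitative via Gautschi's inequality $\Gamma(\tfrac{3p}4-\tfrac12)/\Gamma(\tfrac{3p}4)>(\tfrac{3p}4)^{-\frac12}$ together with $\psi(\tfrac{3p}4)>\psi(\tfrac{3p}4-\tfrac12)$; both routes are valid, and yours is arguably more elementary at the cost of an inexplicit constant.

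The one concrete defect is your pairing estimate. Cauchy--Schwarz in the form $|\langle (S_p-S_\ell)(u_-),u_-\rangle|\lesssim\|S_p-S_\ell\|\,\|u_-\|_{H^1}^2$ gives only $\sigma\,\|S_p-S_\ell\|$, since $\|\varphi_\sigma\|_{H^1}\sim\sigma^{\frac12}$ by \eqref{Hsbound}; dividing by the main term $\sigma^5$ and balancing $\sigma^{-4}\|S_p-S_\ell\|$ against the approximate-identity error $\sigma^{\frac14}$ yields only $|p-\ell|\lesssim\|S_p-S_\ell\|^{\frac1{17}}$, not the claimed exponent $\tfrac19$. To recover $\tfrac19$ you must estimate the pairing as the paper does for \eqref{mainLHS}, by duality:
\[
|\langle w,\varphi_\sigma\rangle|\leq\|w\|_{\dot H^1}\|\varphi_\sigma\|_{\dot H^{-1}}\lesssim\|S_p-S_\ell\|\,\sigma^{\frac12}\cdot\sigma^{\frac52}=\sigma^3\|S_p-S_\ell\|,
\]
using \eqref{Hsbound} with $s=-1$; then $\sigma^{-2}\|S_p-S_\ell\|+\sigma^{\frac14}$ balances at $\sigma=\|S_p-S_\ell\|^{\frac49}$, giving the stated bound. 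This is a fixable bookkeeping loss rather than a structural gap, but as written your displayed inequality does not prove the stated exponent. (A smaller imprecision: in the pure power-type setting there is no coupling constant, so ``one power of the coupling smaller'' should be replaced by the statement that the Born error carries extra powers of the data size, $\sigma^7$ versus $\sigma^5$, which the small-data theory of Theorem~\ref{T:direct-general} provides uniformly in $p\in[\tfrac43,4]$.)
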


The proof of Theorem~\ref{T:pq} begins along similar lines to the proof of Theorem~\ref{T:stable}.  In the present setting, one needs to analyze the normalizing constant $\lambda(p)$ arising in the approximate identity argument mentioned above (see Proposition~\ref{P:approx-id}).  In particular, we derive an upper bound on $|\lambda(p)-\lambda(\ell)|$ in terms of $\|S_p-S_\ell\|$, and then establish a lower bound of the form
\[
|\lambda(p)-\lambda(\ell)|\gtrsim |p-\ell|. 
\]

Combining the arguments used to prove Theorem~\ref{T:stable} and Theorem~\ref{T:pq}, one can also obtain an estimate of the form
\[
\|\lambda(p)a - \lambda(\ell)b\|_{L^\infty} \lesssim \|S_1-S_2\|^{\frac{1}{9}},
\]
where $S_1,S_2$ are the scattering maps corresponding to \eqref{nls} with nonlinearities $a|u|^pu $ and $b|u|^\ell u$, respectively.  While this estimate is harder to interpret directly, it can still be used to prove that if $S_1=S_2$ then $p=\ell$ and $a\equiv b$ (recovering results of \cite{Watanabe, Murphy}).



The rest of this paper is organized as follows: In Section~\ref{S:prelim}, we collect some preliminary results.  We also prove the approximate identity result Proposition~\ref{P:approx-id}.  In Section~\ref{S:first_results}, we prove the small-data scattering result for \eqref{nls}. In Section~\ref{S:main}, we prove the main result, Theorem~\ref{T:stable}.  Finally, in Section~\ref{S:pq} we prove Theorem~\ref{T:pq}. 

\subsection*{Acknowledgements} J.M. was supported by NSF grant DMS-2137217. We are grateful to Gunther Uhlmann for suggesting that we consider stability estimates in this setting, as well as to John Singler for some helpful suggestions.

\section{Preliminaries}\label{S:prelim}
We write $A\lesssim B$ to denote $A\leq CB$ for some $C>0$, with dependence on parameters indicated by subscripts. We write $W^{1,\infty}$ for the Sobolev space with norm
\[
\|a\|_{W^{1,\infty}}=\|a\|_{L^\infty}+\|\nabla a\|_{L^\infty}. 
\]
For $1<r<\infty$ we write $H^{s,r}$ for the Sobolev space with norm
\[
\|u\|_{H^{s,r}} = \|\langle \nabla \rangle^s u\|_{L^r},
\]
where $\langle\nabla\rangle = \sqrt{1-\Delta}$.  We write $q'$ for the H\"older dual of an exponent $q$, i.e. the solution to $\tfrac{1}{q}+\tfrac{1}{q'}=1$. 

We write $e^{it\Delta}$ for the Schr\"odinger group $e^{it\Delta}=\F^{-1} e^{-it|\xi|^2}\F$, where $\F$ denotes the Fourier transform. 

We utilize the following Strichartz estimates \cite{GinibreVelo, KeelTao, Strichartz} in three space dimensions.

\begin{proposition}[Strichartz, \cite{GinibreVelo, KeelTao, Strichartz}] For any $2\leq q,\tilde q,r,\tilde r\leq\infty$ satisfying
\[
\tfrac{2}{q}+\tfrac{3}{r}=\tfrac{2}{\tilde q}+\tfrac{3}{\tilde r} = \tfrac{3}{2},
\]
we have
\begin{align*}
\|e^{it\Delta}\varphi\|_{L_t^q L_x^r(\R\times\R^3)}&\lesssim \|\varphi\|_{L^2}, \\
\biggl\| \int_{-\infty}^t e^{i(t-s)\Delta} F(s)\,ds\biggr\|_{L_t^q L_x^r(\R\times\R^3)}&\lesssim \|F\|_{L_t^{\tilde q'}L_x^{\tilde r'}(\R\times\R^3)}.
\end{align*}
\end{proposition}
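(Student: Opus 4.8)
The plan is to follow the classical route: reduce the space-time bounds to a fixed-time dispersive estimate via the $TT^*$ method and fractional integration, reserving the endpoint for the abstract argument of \cite{KeelTao}. First I would record the two basic facts about the free propagator. The $L^2$ unitarity $\|e^{it\Delta}\varphi\|_{L^2}=\|\varphi\|_{L^2}$ follows since $e^{-it|\xi|^2}$ is a unimodular Fourier multiplier, and the dispersive bound $\|e^{it\Delta}\varphi\|_{L_x^\infty}\lesssim |t|^{-3/2}\|\varphi\|_{L_x^1}$ follows by writing $e^{it\Delta}\varphi$ as convolution against the explicit kernel $(4\pi i t)^{-3/2}e^{i|x|^2/(4t)}$. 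Interpolating between these endpoints via Riesz--Thorin yields the fixed-time decay estimate
\[
\|e^{it\Delta}\varphi\|_{L_x^r}\lesssim |t|^{-3(\frac12-\frac1r)}\|\varphi\|_{L_x^{r'}},\qquad 2\leq r\leq\infty.
\]
The admissibility relation $\tfrac{2}{q}+\tfrac{3}{r}=\tfrac32$ is exactly what makes $3(\tfrac12-\tfrac1r)=\tfrac2q$, so this decay rate is $|t|^{-2/q}$.

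Next I would deploy the $TT^*$ argument. By duality the homogeneous estimate $\|e^{it\Delta}\varphi\|_{L_t^qL_x^r}\lesssim\|\varphi\|_{L^2}$ is equivalent to the boundedness of $T^*:F\mapsto\int_\R e^{-is\Delta}F(s)\,ds$ from $L_t^{q'}L_x^{r'}$ into $L^2$, which is in turn equivalent to the boundedness of
\[
TT^*F=\int_\R e^{i(t-s)\Delta}F(s)\,ds
\]
from $L_t^{q'}L_x^{r'}$ to $L_t^qL_x^r$. Applying Minkowski's inequality and the fixed-time decay estimate pointwise in $(t,s)$ bounds the $L_x^r$ norm of the integrand by $|t-s|^{-2/q}\|F(s)\|_{L_x^{r'}}$, and the Hardy--Littlewood--Sobolev inequality in the time variable then controls the resulting fractional integral in $L_t^q$ precisely when $0<\tfrac2q<1$, i.e. in the non-endpoint range $q>2$. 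This establishes the first inequality away from the endpoint.

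For the retarded (inhomogeneous) estimate I would first prove the analogous \emph{untruncated} bound for $\int_\R e^{i(t-s)\Delta}F(s)\,ds$ mapping $L_t^{\tilde q'}L_x^{\tilde r'}\to L_t^qL_x^r$; this follows from the same dispersive and Hardy--Littlewood--Sobolev computation, now realized as the composition of $T$ with the adjoint operator built from the $\tilde q,\tilde r$ exponents. I would then pass from $\int_\R$ to the truncated integral $\int_{-\infty}^t$ by the Christ--Kiselev lemma, which is valid whenever $\tilde q'<q$, hence off the endpoint.

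The main obstacle is the endpoint $q=\tilde q=2$, $r=\tilde r=6$, where $\tfrac2q=1$ so the kernel $|t-s|^{-1}$ fails the scaling-critical integrability needed for Hardy--Littlewood--Sobolev, and where the Christ--Kiselev lemma also breaks down since $\tilde q'=q$. Here I would invoke the abstract endpoint machinery of \cite{KeelTao}: decompose $TT^*$ dyadically in $|t-s|$, estimate each dyadic piece using the fixed-time decay together with the off-diagonal $L^2$ bounds, and sum by a bilinear real-interpolation argument that exploits the strict inequalities available on either side of the $(2,6)$ endpoint. Since this is exactly the content of \cite{KeelTao}, I would cite that work for the endpoint rather than reproduce the delicate summation, and treat the non-endpoint cases by the elementary argument above.
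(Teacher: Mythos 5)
Your proposal is correct, and it is essentially the paper's own approach: the paper states this proposition without proof, citing exactly the references \cite{GinibreVelo, KeelTao, Strichartz} whose arguments you reproduce (fixed-time dispersive estimate plus $TT^*$ and Hardy--Littlewood--Sobolev off the endpoint, Christ--Kiselev for the retarded truncation when $\tilde q'<q$, and the Keel--Tao bilinear interpolation machinery at $q=\tilde q=2$). The only point worth flagging is the trivial pair $(q,r)=(\infty,2)$, where $2/q=0$ falls outside the Hardy--Littlewood--Sobolev range but the homogeneous bound is just unitarity and the retarded bound follows from Minkowski plus the dual homogeneous estimate; note also that the untruncated inhomogeneous estimate needs no second fractional-integration step, since it factors as $e^{it\Delta}$ composed with the adjoint operator and hence follows from the homogeneous estimate and its dual.
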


The following approximate identity estimate plays a key role in both Theorem~\ref{T:stable} and Theorem~\ref{T:pq}.  It is based on the explicit computation of the solution to the linear Schr\"odinger equation with Gaussian data. We present the result in the setting of general dimensions and short-range powers. 

\begin{proposition}[Approximate identity estimate]\label{P:approx-id} Let $d\geq 1$ and $p>\tfrac{2}{d}$. 
 Given $x_0\in\R^d$ and $\sigma>0$, define 
\[
\varphi_{\sigma,x_0}(x) = \exp\{-\tfrac{|x-x_0|^2}{4\sigma^2}\}
\]
and
\begin{equation}\label{lambda}
\lambda(d,p) := \pi^{\frac{d}{2}+1}\bigl[\tfrac{4}{p+2}]^{\frac{d}{2}} \tfrac{\Gamma(\frac{dp}{4}-\frac12)}{\Gamma(\frac{dp}{4})}.
\end{equation}
Given $a\in W^{1,\infty}(\R^d)$, we have
\begin{align*}
\biggl| & \iint_{\R\times\R^d} |e^{it\Delta}\varphi_{\sigma,x_0}(x)|^{p+2} a(x) \,dx\,dt - \sigma^{d+2}\lambda(d,p)a(x_0)\biggr| \leq c_s \sigma^{d+2+s}\|a\|_{W^{1,\infty}}
\end{align*}
for any $0<s<1-\tfrac{2}{dp}$, where $c_s\to\infty$ as $s\to 1-\tfrac{2}{dp}$. 
\end{proposition}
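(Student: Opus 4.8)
The plan is to reduce the entire estimate to an explicit Gaussian computation in the variables $t$ and $z=(x-x_0)/\sigma$. First I would compute the free evolution of the profile. Since $e^{it\Delta}$ commutes with translations it suffices to evolve $\varphi_{\sigma,0}$; computing $\F\varphi_{\sigma,0}$ (itself a Gaussian), multiplying by $e^{-it|\xi|^2}$, and inverting gives the standard formula
\[
e^{it\Delta}\varphi_{\sigma,x_0}(x)=\frac{\sigma^d}{(\sigma^2+it)^{d/2}}\exp\Bigl\{-\frac{|x-x_0|^2}{4(\sigma^2+it)}\Bigr\}.
\]
Taking the modulus, so that $|\sigma^2+it|^{d/2}=(\sigma^4+t^2)^{d/4}$ and the real part of the exponent is $-\tfrac{\sigma^2|x-x_0|^2}{4(\sigma^4+t^2)}$, and then raising to the power $p+2$ produces an explicit, manifestly nonnegative integrand.

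Next I would recognize the double integral as a convolution. Integrating the integrand in $t$ defines a radial kernel $F_\sigma$ for which the left-hand side equals $(F_\sigma\ast a)(x_0)$. The structural point is a scaling identity: the substitution $t=\sigma^2\tau$ together with the rescaling $y=\sigma z$ shows that $F_\sigma(y)=\sigma^2 G(y/\sigma)$ for the fixed, $\sigma$-independent profile
\[
G(z)=\int_\R (1+\tau^2)^{-\frac{d(p+2)}{4}}\exp\Bigl\{-\frac{(p+2)|z|^2}{4(1+\tau^2)}\Bigr\}\,d\tau.
\]
Consequently $\int_{\R^d}F_\sigma(y)\,dy=\sigma^{d+2}\int_{\R^d}G(z)\,dz$ and $\int_{\R^d}F_\sigma(y)|y|^s\,dy=\sigma^{d+2+s}\int_{\R^d}G(z)|z|^s\,dz$, which already exhibits the powers $\sigma^{d+2}$ and $\sigma^{d+2+s}$ appearing in the statement.

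I would then evaluate the mass and the error by Fubini. Integrating $G$ in $z$ first is a Gaussian integral that contributes a factor $(1+\tau^2)^{d/2}$ and reduces the total mass to the Beta-type integral $\int_\R(1+\tau^2)^{-dp/4}\,d\tau=\sqrt{\pi}\,\Gamma(\tfrac{dp}{4}-\tfrac12)/\Gamma(\tfrac{dp}{4})$; this converges precisely because $\tfrac{dp}{4}>\tfrac12$, i.e. $p>\tfrac2d$, and collecting the constants yields $\int_{\R^d}G(z)\,dz=\lambda(d,p)$ as in \eqref{lambda}. For the error I use this normalization to write the difference as $\int_{\R^d}F_\sigma(y)\,[a(x_0+y)-a(x_0)]\,dy$, and interpolate the two elementary bounds $|a(x_0+y)-a(x_0)|\le 2\|a\|_{L^\infty}$ and $|a(x_0+y)-a(x_0)|\le\|\nabla a\|_{L^\infty}|y|$ to get $|a(x_0+y)-a(x_0)|\lesssim\|a\|_{W^{1,\infty}}|y|^s$ for $0\le s\le 1$. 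This bounds the error by $\|a\|_{W^{1,\infty}}\sigma^{d+2+s}\int_{\R^d}G(z)|z|^s\,dz$, and the same Fubini computation reduces the remaining integral to $\int_\R(1+\tau^2)^{\frac s2-\frac{dp}{4}}\,d\tau$, which is finite for $s$ in the stated range and whose value (through the corresponding Gamma ratio) forces $c_s\to\infty$ as $s$ approaches the endpoint of integrability.

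The main obstacle is sharpness and bookkeeping rather than any single difficult estimate: one must extract the modulus of the complex Gaussian correctly so that the temporal decay $(\sigma^4+t^2)^{-d(p+2)/4}$ and the spatial decay match up, and then track how the admissible range of $s$ — equivalently the size of $c_s$ — is controlled by the convergence of the $\tau$-integral. The conceptual crux is that the hypothesis $p>\tfrac2d$ is exactly the condition guaranteeing that the temporal integral defining the total mass converges, and that each additional power of $|y|$ in the weighted estimate worsens the decay of the $\tau$-integrand, which is what limits $s$ and drives the blow-up of $c_s$.
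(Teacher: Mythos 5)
Your proposal is correct, and for the main term it coincides with the paper's proof: both evolve the Gaussian explicitly, extract the modulus to get the rescaled kernel $K(\tfrac{t}{\sigma^2},\tfrac{x-x_0}{\sigma})$ with $\sigma$-independent profile, and evaluate the total mass via the Gaussian integral together with the Beta-function identity $\int_\R(1+\tau^2)^{-c}\,d\tau=\pi^{\frac12}\Gamma(c-\tfrac12)/\Gamma(c)$, whose convergence at $c=\tfrac{dp}{4}>\tfrac12$ is exactly the hypothesis $p>\tfrac2d$. Where you genuinely diverge is the error estimate. The paper splits the convolution error into the regions $|x|\leq\delta$ and $|x|>\delta$, applies the fundamental theorem of calculus on the inner region and the weighted tail bound $\int_\R\int_{|x|>R}K\,dx\,dt\lesssim_s R^{-s}$ (valid for $0<s<\tfrac{dp}{2}-1$) on the outer one, and then optimizes $\delta=\sigma^{\frac{s}{1+s}}$; the resulting substitution $s\mapsto\tfrac{s}{1+s}$ is precisely what produces the stated range $0<s<1-\tfrac{2}{dp}$ and the blow-up of $c_s$ at its endpoint. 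You instead use the one-step pointwise interpolation $|a(x_0+y)-a(x_0)|\lesssim\|a\|_{W^{1,\infty}}\min(1,|y|)\leq\|a\|_{W^{1,\infty}}|y|^s$ together with a single weighted moment $\int_{\R^d}G(z)|z|^s\,dz$, which your Fubini computation shows is finite for $0<s<\min(1,\tfrac{dp}{2}-1)$. Since $\min(1,\tfrac{dp}{2}-1)>1-\tfrac{2}{dp}$ whenever $dp>2$, your argument is both simpler (no $\delta$-optimization) and strictly stronger: it proves the estimate on a larger range of $s$, with a constant that stays bounded as $s\to1-\tfrac{2}{dp}$, so the divergence of $c_s$ asserted in the statement is revealed to be an artifact of the paper's two-region splitting rather than an intrinsic feature. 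One small correction to your final sentence: in your argument the constant blows up as $s\to\min(1,\tfrac{dp}{2}-1)$ (the integrability endpoint of the $\tau$-integral, or the endpoint $s=1$ of the interpolation inequality when $dp>4$), not as $s\to1-\tfrac{2}{dp}$; since the stated range lies strictly inside yours, this discrepancy only means you have proved more than was asked.
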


\begin{proof} We have
\[
e^{it\Delta}\varphi_{\sigma,x_0}(x) = \bigl[\tfrac{\sigma^2}{\sigma^2+it}\bigr]^{\frac{d}{2}}\exp\bigl\{-\tfrac{|x-x_0|^2}{4(\sigma^2+it)}\bigr\}
\]
(see \cite{Visan}), so that
\begin{align*}
|e^{it\Delta}\varphi_{\sigma,x_0}(x)|^{p+2}&=\bigl[\tfrac{\sigma^4}{\sigma^4+t^2}\bigr]^{\frac{d(p+2)}{4}}\exp\{-\tfrac{\sigma^2|x-x_0|^2(p+2)}{4(\sigma^4+t^2)}\bigr\} \\
& = K(\tfrac{t}{\sigma^2},\tfrac{x-x_0}{\sigma}),
\end{align*}
where
\[
K(t,x):=\bigl[\tfrac{1}{1+t^2}\bigr]^{\frac{d(p+2)}{4}}\exp\bigl\{-\tfrac{|x|^2(p+2)}{4(1+t^2)}\bigr\}. 
\]

We now show that $\int K(t,x)\,dx\,dt = \lambda(d,p)$.  To this end, we first recall the Gaussian integral
\[
\int_\R \exp\{-cy^2\}\,dy = (\tfrac{\pi}{c})^{\frac12}.
\]
We next use the change of variables $u=(1+t^2)^{-1}$ to obtain
\begin{align*}
\int_\R(1+t^2)^{-c}\,dt & = 2\int_0^\infty (1+t^2)^{-c} \,dt\\
& =\int_0^1 u^{c-\frac32}(1-u)^{-\frac12}\,du \\
& = B(\tfrac12,c-\tfrac12) = \tfrac{\Gamma(\frac12)\Gamma(c-\frac12)}{\Gamma(c)} = \pi^{\frac12}\tfrac{\Gamma(c-\frac12)}{\Gamma(c)}
\end{align*}
for $c>\tfrac12$, where $B$ is the Euler Beta function.  Thus
\begin{align*}
\int_{\R\times\R^d} K(t,x)\,dx\,dt & = [\tfrac{4\pi}{p+2}]^{\frac{d}{2}}\int_\R (1+t^2)^{-\frac{dp}{4}}\,dt \\
& = \pi^{\frac{d}{2}+1}\bigl[\tfrac{4}{p+2}]^{\frac{d}{2}} \tfrac{\Gamma(\frac{dp}{4}-\frac12)}{\Gamma(\frac{dp}{4})}=\lambda(d,p),
\end{align*}
where we have used the fact that $p>\frac{2}{d}$.  

We also observe that for any $R>0$ and any $0<s<\tfrac{dp}{2}-1$, we may estimate
\begin{equation}\label{K-weighted}
\begin{aligned}
\int_\R\int_{|x|>R} K(t,x)\,dx\,dt & \lesssim R^{-s} \iint |x|^{s} K(t,x)\,dx\,dt \\
& \lesssim R^{-s}\int(1+t^2)^{-\frac{dp}{4}+\frac{s}{2}}\,dt \lesssim_s R^{-s}.
\end{aligned}
\end{equation}

By a change of variables, we have
\begin{equation}\label{int-K-sigma}
\iint_{\R\times\R^d} K(\tfrac{t}{\sigma^2},\tfrac{x}{\sigma})\,dx\,dt = \sigma^{d+2}\lambda(d,p).
\end{equation}
Thus we can write
\begin{align}
\biggl|\iint_{\R\times\R^d}& |e^{it\Delta}\varphi_{\sigma,x_0}(x)|^{p+2}a(x)\,dx\,dt - \sigma^{d+2}\lambda(d,p)a(x_0)\biggr| \nonumber \\
& = \biggl|\iint_{\R\times\R^d} K(\tfrac{t}{\sigma^2},\tfrac{x}{\sigma})[a(x_0-x)-a(x_0)]\,dx\,dt\biggr| \nonumber \\
& \leq \int_\R\int_{|x|\leq \delta} K(\tfrac{t}{\sigma^2},\tfrac{x}{\sigma})|a(x_0-x)-a(x_0)|\,dx\,dt \label{approx-id-error1} \\
& \quad + \int_\R\int_{|x|>\delta} K(\tfrac{t}{\sigma^2},\tfrac{x}{\sigma})|a(x_0-x)-a(x_0)|\,dx\,dt, \label{approx-id-error2} 
\end{align}
where $\delta>0$ will be determined below.

By the fundamental theorem of calculus and \eqref{int-K-sigma}, we first estimate
\begin{align*}
\eqref{approx-id-error1} \lesssim \delta \sigma^{d+2}\|\nabla a\|_{L^\infty}.
\end{align*}
Next, we use \eqref{K-weighted} to obtain
\begin{align*}
\eqref{approx-id-error2} & \lesssim \sigma^{d+2}\|a\|_{L^\infty}\int_\R \int_{|y|>\frac{\delta}{\sigma}}K(t,y)\,dy\,dt \lesssim_s [\tfrac{\sigma}{\delta}]^{s} \sigma^{d+2} \|a\|_{L^\infty}
\end{align*}
for $0<s<\tfrac{dp}{2}-1$. Choosing $\delta=\sigma^{\frac{s}{1+s}}$ leads to
\[
\biggl|\iint_{\R\times\R^d}|e^{it\Delta}\varphi_{\sigma,x_0}(x)|^{p+2}a(x)\,dx\,dt - \sigma^{d+2}\lambda(d,p)a(x_0)\biggr|\lesssim_s \sigma^{\frac{s}{1+s}}\sigma^{d+2}\|a\|_{W^{1,\infty}}
\]
for any $0<s<\tfrac{dp}{2}-1$, which yields the result. 
\end{proof}

\section{Small-data scattering}\label{S:first_results}

In this section we prove the following small-data scattering result.

\begin{theorem}\label{T:direct-general} Let $a\in W^{1,\infty}(\R^3)$ and $p\in[\tfrac43,4]$. Define
\begin{equation}\label{exponents}
(q,r) = (p+2,\tfrac{6(p+2)}{3(p+2)-4}) \qtq{and} s_c=\tfrac32-\tfrac{2}{p}.
\end{equation}
There exists $\eta>0$ sufficiently small so that for any $u_-\in H^1$ satisfying $\|u_-\|_{H^1}<\eta$, there exists a unique global solution $u$ to \eqref{nls} and $u_+\in H^1$ satisfying the following:
\begin{align*}
\|u\|_{L_t^q L_x^r(\R\times\R^3)} &\lesssim \|u_-\|_{L^2},\\
\|\nabla u \|_{L_t^q L_x^r(\R\times\R^3)} &\lesssim \|u_-\|_{H^1}, \\
\| |\nabla|^{s_c} u\|_{L_t^q L_x^r(\R\times\R^3)} &\lesssim \|u_-\|_{\dot H^{s_c}},
\end{align*}
and
\[
\lim_{t\to\pm\infty}\|u(t)-e^{it\Delta}u_\pm\|_{H^1} = 0.
\]
\end{theorem}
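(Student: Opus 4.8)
The plan is to solve \eqref{nls} by a contraction mapping argument applied to the Duhamel formulation with data prescribed at $t=-\infty$, i.e.\ to produce a fixed point of
\[
\Phi(u)(t) = e^{it\Delta}u_- - i\int_{-\infty}^t e^{i(t-s)\Delta}\bigl[a|u|^pu\bigr](s)\,ds.
\]
One first checks that the pair $(q,r)$ in \eqref{exponents} is Strichartz-admissible ($\tfrac2q+\tfrac3r=\tfrac32$) and that $s_c=\tfrac32-\tfrac2p$ is the scaling-critical Sobolev regularity, ranging over $[0,1]$ as $p$ ranges over $[\tfrac43,4]$. Accordingly, I would run the contraction at the critical level, working in the complete metric space
\[
X = \bigl\{u : \|u\|_S\le 2C\|u_-\|_{\dot H^{s_c}},\ \|u\|_{L_t^qL_x^r}\le 2C\|u_-\|_{L^2},\ \|\nabla u\|_{L_t^qL_x^r}\le 2C\|u_-\|_{\dot H^1}\bigr\},
\]
where $\|u\|_S:=\|\,|\nabla|^{s_c}u\|_{L_t^qL_x^r}$, equipped with the weaker metric $d(u,v)=\|u-v\|_{L_t^qL_x^r}$, in which the difference estimate is cleanest.

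The crux is the nonlinear estimate. Writing $\bar r$ for the Sobolev-conjugate exponent determined by $\tfrac1{\bar r}=\tfrac1r-\tfrac{s_c}3$, the embedding $\dot W^{s_c,r}\hookrightarrow L^{\bar r}$ gives $\|u\|_{L_t^qL_x^{\bar r}}\lesssim\|u\|_S$. I would then establish, via the fractional chain rule for $|z|^pz$ (valid for $0<s_c<1$ since this nonlinearity is $C^1$ when $p\ge\tfrac43$, with the ordinary chain rule at $s_c=1$ and no derivative at $s_c=0$) together with H\"older,
\[
\bigl\|\,|\nabla|^{s_c}\bigl(a|u|^pu\bigr)\bigr\|_{L_t^{q'}L_x^{r'}} \lesssim \|a\|_{W^{1,\infty}}\,\|u\|_{L_t^qL_x^{\bar r}}^{\,p}\,\|u\|_S .
\]
The inhomogeneity is absorbed by the fractional Leibniz rule: for $s_c\le1$ and $a\in W^{1,\infty}$ one has $\|\,|\nabla|^{s_c}a\|_{L^\infty}\lesssim\|a\|_{W^{1,\infty}}$, so multiplication by $a$ costs only $\|a\|_{W^{1,\infty}}$. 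The essential book-keeping is that the spatial H\"older exponents close precisely because $\tfrac{p+2}{r}=\tfrac p2+\tfrac13=1+\tfrac{ps_c}3$, while the temporal H\"older closes precisely because $q=p+2$; these two identities are exactly what forces the choice \eqref{exponents}. I expect this uniform-in-$p$ nonlinear estimate—in particular reconciling the fractional derivative falling on $a$ versus on $|u|^pu$, with constants that do not degenerate at the endpoints $p=\tfrac43$ and $p=4$—to be the main technical obstacle.

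Granting this estimate, the Strichartz estimates above yield $\|\Phi(u)\|_S\lesssim\|u_-\|_{\dot H^{s_c}}+\|u\|_S^{p+1}$, and the pointwise bound $\bigl|\,|u|^pu-|v|^pv\bigr|\lesssim(|u|^p+|v|^p)|u-v|$ gives the contraction estimate $d(\Phi(u),\Phi(v))\lesssim(\|u\|_{L_t^qL_x^{\bar r}}^p+\|v\|_{L_t^qL_x^{\bar r}}^p)\,d(u,v)$. Choosing $\eta$ small makes $\Phi$ a self-map of $X$ and a contraction, producing a unique fixed point $u$ with $\|u\|_S\lesssim\|u_-\|_{\dot H^{s_c}}$. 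Running the same Strichartz argument at the $L^2$ and $\dot H^1$ levels—now with the smallness of $\|u\|_S$ already in hand (persistence of regularity)—yields the remaining two bounds, and hence the full $H^1$ bound. Finally, scattering follows in the standard way: defining $u_+$ by the formula \eqref{u_+}, the finiteness of $\|a|u|^pu\|_{L_t^{q'}L_x^{r'}}$ shows that $e^{-it\Delta}u(t)$ is Cauchy in $H^1$ as $t\to+\infty$ with limit $u_+$, while the backward limit is $u_-$ by construction, giving $\lim_{t\to\pm\infty}\|u(t)-e^{it\Delta}u_\pm\|_{H^1}=0$.
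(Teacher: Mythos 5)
Your scheme coincides with the paper's in all of its structural choices: the same Duhamel operator \eqref{Duhamel} with data at $t=-\infty$, the same three-norm space $X$ built on $L_t^qL_x^r$ bounds at the $L^2$, gradient, and critical-regularity levels with the exponents \eqref{exponents}, the same weak metric $d(u,v)=\|u-v\|_{L_t^qL_x^r}$ (your $\bar r$ is exactly the paper's $r_c=\tfrac{3p(p+2)}{4}$ from \eqref{exponents2}), the same pointwise difference bound for the contraction, and the same Cauchy-in-$H^1$ argument producing $u_+$ via \eqref{u_+}. The one genuine divergence is at the critical-regularity estimate. You propose the fractional chain rule for $|z|^pz$ together with a fractional Leibniz rule and the bound $\||\nabla|^{s_c}a\|_{L^\infty}\lesssim\|a\|_{W^{1,\infty}}$, and you rightly flag the uniformity of the constants as $s_c$ sweeps $[0,1]$ (with the degenerate endpoints $p=\tfrac43$, $p=4$) as the main technical burden. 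The paper dissolves precisely this obstacle: since $0\le s_c\le 1$, it estimates $\||\nabla|^{s_c}(a|u|^pu)\|_{L_t^{q'}L_x^{r'}}$ by the full norm $\|a|u|^pu\|_{L_t^{q'}H_x^{1,r'}}$ and then applies only the integer-order product rule plus the Sobolev embedding $\dot H^{s_c,r}\hookrightarrow L^{r_c}$, so no fractional calculus appears and uniformity in $p$ is automatic. Your route can be made to work (the nonlinearity is $C^1$ for $p\ge\tfrac43$, and Christ--Weinstein constants are controlled on compact ranges of $s$), but it is strictly more machinery for the same estimate.

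One bound in your space $X$ is too strong as stated and would not close: you require $\|\nabla u\|_{L_t^qL_x^r}\le 2C\|u_-\|_{\dot H^1}$, but when the derivative falls on $a$ the resulting term is bounded via
\[
\|\nabla a\|_{L^\infty}\,\|u\|_{L_t^qL_x^{r_c}}^{p}\,\|u\|_{L_t^qL_x^r}\lesssim \eta^p\|u_-\|_{L^2},
\]
which is not controlled by $\eta^p\|u_-\|_{\dot H^1}$; hence neither the self-map bound nor your persistence-of-regularity pass yields the homogeneous estimate, only $\|\nabla u\|_{L_t^qL_x^r}\lesssim\|u_-\|_{H^1}$ --- which is exactly what the theorem asserts and what the paper builds into its definition of $X$. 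Indeed, the paper's remark following the proof states that upgrading $H^1$ to $\dot H^1$ here requires additional space-time norms. Replacing $\dot H^1$ by $H^1$ in your definition of $X$ repairs this with no other change to your argument.
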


\begin{proof} We construct $u$ to satisfy the Duhamel formula
\begin{equation}\label{Duhamel}
u(t) = \Phi u(t):= e^{it\Delta} u_- - i\int_{-\infty}^t e^{i(t-s)\Delta}a(x)|u|^p u(s)\,ds,
\end{equation}
where $\|u_-\|_{H^1}\leq \eta\ll 1$.  It suffices to prove that $\Phi$ is a contraction on a suitable complete metric space.  To this end, we fix $u_-\in H^1$ and define $X$ to be the set of functions $u:\R\times\R^d\to\C$ satisfying the bounds
\[
\|u\|_{L_t^q L_x^r} \leq 4C\|u_-\|_{L^2},\quad \|\nabla u\|_{L_t^q L_x^r} \leq 4C\|u_-\|_{H^1},\quad \||\nabla|^{s_c} u\|_{L_t^q L_x^r} \leq 4C\|u_-\|_{\dot H^{s_c}},
\]
where $q,r$ are defined in \eqref{exponents}, all space-time norms are over $\R\times\R^3$, and $C$ encodes implicit constants in inequalities such as Strichartz and Sobolev embedding.  We equip $X$ with the metric
\[
d(u,v) = \|u-v\|_{L_t^q L_x^r}
\]
and we define 
\begin{equation}\label{exponents2}
r_c = \tfrac{3p(p+2)}{4}, \qtq{so that} \dot H^{s_c,r}\hookrightarrow L^{r_c}. 
\end{equation}

For $u\in X$, we use Strichartz and H\"older, to estimate
\begin{align*}
\|\Phi u\|_{L_t^q L_x^r} & \lesssim \|u_-\|_{L^2} + \|a|u|^p u\|_{L_t^{q'}L_x^{r'}} \\
& \lesssim \|u_-\|_{L^2}+\|a\|_{L^\infty}\|u\|_{L_t^q L_x^{r_c}}^p\|u\|_{L_t^q L_x^r} \\
& \lesssim \|u_-\|_{L^2} + \eta^p\|a\|_{L^\infty} \|u_-\|_{L^2}
\end{align*}
Similarly, using the product rule and Sobolev embedding as well, 
\begin{align*}
\|&\nabla\Phi u\|_{L_t^q L_x^r} \\ & \lesssim \|u_-\|_{\dot H^1} + \|a\|_{L^\infty} \|u\|_{L_t^q L_x^{r_c}}^p \|\nabla u\|_{L_t^q L_x^r} + \|\nabla a\|_{L^\infty}\|u\|_{L_t^q L_x^{r_c}}^{p-1}\|u\|_{L_t^q L_x^r}\|u\|_{L_t^q L_x^{r_c}} \\ 
& \lesssim \|u_-\|_{\dot H^1} + \eta^p \|a\|_{L^\infty}\|u_-\|_{\dot H^1} + \eta^p \|\nabla a\|_{L^\infty}\| |\nabla|^{s_c} u\|_{L_t^q L_x^r} \\
& \lesssim \|u_-\|_{\dot H^1} + \eta^p\|a\|_{W^{1,\infty}}\|u_-\|_{H^1}. 
\end{align*}
Finally, we have
\begin{align*}
\||\nabla|^{\frac12}\Phi u\|_{L_t^q L_x^r}  &\lesssim \|u_-\|_{\dot H^{s_c}} + \|a|u|^p u\|_{L_t^{q'} H_x^{1,r'}} \\
& \lesssim \|u_-\|_{\dot H^{s_c}} + \|a\|_{W^{1,\infty}} \|u\|_{L_t^q L_x^{r_c}}^{p-1} \|u\|_{L_t^q H_x^{1,r}}\|u\|_{L_t^q L_x^{r_c}} \\
& \lesssim \|u_-\|_{\dot H^{s_c}}+ \|a\|_{W^{1,\infty}} \|u\|_{L_t^q L_x^{r_c}}^{p-1}\|u\|_{L_t^q H_x^{1,r}}\| |\nabla|^{s_c} u\|_{L_t^q L_x^r} \\
& \lesssim \|u_-\|_{\dot H^{s_c}} + \eta^p\|a\|_{W^{1,\infty}}\|u_-\|_{\dot H^{s_c}}. 
\end{align*}
It follows that for $\eta$ sufficiently small, $\Phi:X\to X$. 

To see that $\Phi$ is a contraction, we use Strichartz and H\"older to estimate as follows: for $u,v\in X$, 
\begin{align*}
\|\Phi u - \Phi v\|_{L_t^q L_x^r} & \lesssim \|a[u-v]\|_{L_t^{q'}L_x^{r'}} \\
& \lesssim \|a\|_{L^\infty}[\|u\|_{L_t^q L_x^{r_c}}^p+\|v\|_{L_t^q L_x^{r_c}}^p]\|u-v\|_{L_t^q L_x^r} \\
& \lesssim \eta^p\|a\|_{L^\infty} \|u-v\|_{L_t^q L_x^r},
\end{align*}
which shows that $\Phi$ is a contraction for $\eta$ sufficiently small. 

It follows that $\Phi$ has a unique fixed point $u\in X$, which is our desired solution. 

It is not difficult to show that $u$ scatters backward in time to $u_-$, and hence it remains to show that $e^{-it\Delta}u(t)$ has a limit in $H^1$ as $t\to\infty$.  To this end, we fix $t>s>0$ and use the estimates above to obtain
\begin{align*}
\|e^{-it\Delta}u(t)-e^{-is\Delta}u(s)\|_{H^1} & \lesssim \|a |u|^p u\|_{L_t^{q'}L_x^{r'}((s,t)\times\R^3)} \\
& \lesssim \|a\|_{W^{1,\infty}}\|u\|_{L_t^q L_x^{r_c}((s,t)\times\R^3)}^2\|u\|_{L_t^q H_x^{1,r}((s,t)\times\R^3)} \\
& \to 0 \qtq{as}s,t\to\infty.
\end{align*}
Thus $\{e^{-it\Delta}u(t)\}$ is Cauchy in $H^1$ as $\to\infty$ and hence has some limit $u_+\in H^1$ as $t\to\infty$.  In fact, from the Duhamel formula \eqref{Duhamel} we can obtain the implicit formula
\begin{equation}\label{u_+2}
u_+=\lim_{t\to\infty}e^{-it\Delta}u(t) = u_--i\int_\R e^{-is\Delta}a|u|^p u(s)\,ds.
\end{equation}
for the final state $u_+$.  \end{proof}

\begin{remark} By introducing some additional space-time norms into the argument, one can upgrade the estimate
\[
\|\nabla u\|_{L_t^q L_x^r} \lesssim \|u_-\|_{H^1}\qtq{to}\|\nabla u\|_{L_t^q L_x^r}\lesssim \|u_-\|_{\dot H^1}.
\]
However, this refinement is not needed in what follows, so we have opted to keep the argument as simple as possible above. \end{remark}

\section{Proof of Theorem~\ref{T:stable}}\label{S:main}

In this section we prove Theorem~\ref{T:stable}.

\begin{proof}[Proof of Theorem~\ref{T:stable}] We let $p\in[\tfrac43,4]$ and $a,b\in W^{1,\infty}$.  Let $S_a,S_b$ denote the scattering maps for \eqref{nls} with nonlinearities $a(x)|u|^pu$ and $b(x)|u|^p u$, respectively.  Given $\sigma>0$ and $x_0\in\R^3$, we define
\begin{equation}\label{def:gaussian}
\varphi_{\sigma,x_0}(x) = \exp\{-\tfrac{|x-x_0|^2}{4\sigma^2}\}.
\end{equation}
As
\begin{equation}\label{Hsbound}
\|\varphi_{\sigma,x_0}\|_{\dot H^s(\R^3)} \lesssim \sigma^{\frac32-s} \qtq{for}s\in\R,
\end{equation}
we have that $\varphi_{\sigma,x_0}$ belongs to the common domain of $S_a$ and $S_b$ for all $\sigma$ sufficiently small.  Using \eqref{u_+2}, we write
\begin{align*}
S_a(\varphi_{\sigma,x_0})& =\varphi_{\sigma,x_0} - i\iint_\R e^{-it\Delta}\bigl\{a\,|e^{it\Delta}\varphi_{\sigma,x_0}|^p e^{it\Delta}\varphi_{\sigma,x_0}\bigr\}\,dt \\
& \quad -i\int_\R e^{-it\Delta}\bigl\{a\,\bigl[|u|^p u - |e^{it\Delta}\varphi_{\sigma,x_0}|^pe^{it\Delta}\varphi_{\sigma,x_0}\bigr]\bigr\}\,dt,
\end{align*}
where $u$ is the solution to \eqref{nls} that scatters to $\varphi_{\sigma,x_0}$ as $t\to-\infty$ (cf. Theorem~\ref{T:direct}).  Similarly,
\begin{align*}
S_b(\varphi_{\sigma,x_0})& =\varphi_{\sigma,x_0} - i\int_\R e^{-it\Delta}\bigl\{b\,|e^{it\Delta}\varphi_{\sigma,x_0}|^p e^{it\Delta}\varphi_{\sigma,x_0}\bigr\}\,dt \\
& \quad -i\int_\R e^{-it\Delta}\bigl\{b\,\bigl[|v|^p v - |e^{it\Delta}\varphi_{\sigma,x_0}|^p e^{it\Delta}\varphi_{\sigma,x_0}\bigr]\bigr\}\,dt,
\end{align*}
where $v$ is the solution to the  NLS (with nonlinearity $b|v|^p v$) that scatters to $\varphi_{\sigma,x_0}$ as $t\to-\infty$. Thus
\begin{align}
\langle& S_a(\varphi_{\sigma,x_0})-S_b(\varphi_{\sigma,x_0}),\varphi_{\sigma,x_0}\rangle\label{mainLHS} \\
& = -i\iint_{\R\times\R^3} [a(x)-b(x)]|e^{it\Delta}\varphi_{\sigma,x_0}|^{p+2}\,dx\,dt \label{mainRHS}\\
& -i\iint_{\R\times\R^3} a(x)[|u|^p u-|e^{it\Delta}\varphi_{\sigma,x_0}|^p e^{it\Delta}\varphi_{\sigma,x_0}] \overline{e^{it\Delta}\varphi_{\sigma,x_0}}\,dx\,dt \label{error1} \\
& -i\iint_{\R\times\R^3} b(x)[|v|^p v - |e^{it\Delta}\varphi_{\sigma,x_0}|^p e^{it\Delta}\varphi_{\sigma,x_0}] \overline{e^{it\Delta}\varphi_{\sigma,x_0}}\,dx\,dt \label{error2}. 
\end{align}

The terms \eqref{error1} and \eqref{error2} are estimated as in the proof of Theorem~\ref{T:direct} (see \eqref{exponents} and \eqref{exponents2} for the definitions of $q,r,r_c$). We use H\"older, Strichartz, the Duhamel formula \eqref{Duhamel}, Sobolev embedding, Theorem~\ref{T:direct}, and \eqref{Hsbound} to obtain
\begin{align*}
\| a[&|u|^p u-|e^{it\Delta}\varphi_{\sigma,x_0}|^p e^{it\Delta}\varphi_{\sigma,x_0}]e^{it\Delta}\varphi_{\sigma,x_0}\|_{L_{t,x}^1} \\
& \lesssim \|a\|_{L^\infty} \|e^{it\Delta}\varphi_{\sigma,x_0}\|_{L_t^q L_x^r} \| |u|^p + |e^{it\Delta}\varphi_{\sigma,x_0}|^p\|_{L_t^{\frac{q}{p}} L_x^{\frac{r_c}{p}}} \|u(t)-e^{it\Delta}\varphi_{\sigma,x_0}\|_{L_t^q L_x^r} \\
& \lesssim \|a\|_{L^\infty}\|\varphi_{\sigma,x_0}\|_{L^2}\{\|u\|_{L_t^q L_x^{r_c}}^p + \|e^{it\Delta}\varphi_{\sigma,x_0}\|_{L_t^q L_x^{r_c}}^p\}\biggl\| \int_{-\infty}^t e^{i(t-s)\Delta}a(x)|u|^p u\,ds \biggr\|_{L_t^q L_x^r} \\
& \lesssim \|a\|_{L^\infty}\|\varphi_{\sigma,x_0}\|_{L^2}\|\varphi_{\sigma,x_0}\|_{\dot H^{s_c}}^p \|a |u|^p u\|_{L_t^{q'} L_x^{r'}} \\
& \lesssim \|a\|_{L^\infty}^2 \|\varphi_{\sigma,x_0}\|_{L^2}\|\varphi_{\sigma,x_0}\|_{\dot H^{s_c}}^p \|u\|_{L_t^q L_x^{r_c}}^p \|u\|_{L_t^q L_x^r} \\
& \lesssim \|a\|_{L^\infty}^2 \|\varphi_{\sigma,x_0}\|_{L^2}^2\|\varphi_{\sigma,x_0}\|_{\dot H^{s_c}}^{2p} \\
& \lesssim \sigma^7\|a\|_{L^\infty}^2. 
\end{align*}
Similarly,
\[
\|b[|v|^p v - |e^{it\Delta}\varphi_{\sigma,x_0}|^p e^{it\Delta}\varphi_{\sigma,x_0}]\|_{L_{t,x}^1} \lesssim \sigma^7\|b\|_{L^\infty}^2 . 
\]

For \eqref{mainLHS}, we use Cauchy--Schwarz and \eqref{Hsbound} to obtain
\begin{align*}
|\langle S_a(\varphi_{\sigma,x_0})-S_b(\varphi_{\sigma,x_0}),\varphi_{\sigma,x_0}\rangle| & \leq \|S_a(\varphi)-S_b(\varphi)\|_{\dot H^1}\|\varphi_{\sigma,x_0}\|_{\dot H^{-1}}\\
& \lesssim \|S_a-S_b\|\,\|\varphi_{\sigma,x_0}\|_{H^1}\|\varphi_{\sigma,x_0}\|_{\dot H^{-1}} \\
&\lesssim \sigma^3\|S_a-S_b\|
\end{align*}

For \eqref{mainRHS}, we make use of Proposition~\ref{P:approx-id} with $d=3$ and $s=\frac{1}{4}$. Using the fact that $p\geq\tfrac43$, this proposition implies that
\[
\biggl|  \iint [a-b]|e^{it\Delta}\varphi_{\sigma,x_0}|^{p+2}\,dx\,dt - c\sigma^5[a(x_0)-b(x_0)] \biggr|
 \lesssim \sigma^{5+\frac14}[\|a\|_{W^{1,\infty}}+\|b\|_{W^{1,\infty}}].
\]
where $c=\lambda(3,p)$. Combining this with the estimates for \eqref{error1}--\eqref{error2}, we deduce
\begin{align*}
|a(x_0)-b(x_0)|&  \lesssim \sigma^{-2}\|S_a-S_b\| +\sigma^{\frac14}\{\|a\|_{W^{1,\infty}}+\|b\|_{W^{1,\infty}}\} \\
& \quad + \sigma^2\{\|a\|_{L^\infty}^2+\|b\|_{L^\infty}^2\}. 
\end{align*}
If we now choose
\[
\sigma=\eps\cdot\biggl[\frac{\|S_a-S_b\|}{\|a\|_{W^{1,\infty}}+\|b\|_{W^{1,\infty}}}\biggr]^{\frac{4}{9}}
\]
for sufficiently small $\eps>0$, then we obtain
\begin{align*}
|a(x_0)-b(x_0)| & \lesssim \{\|a\|_{W^{1,\infty}} +\|b\|_{W^{1,\infty}}\}^{\frac{8}{9}}\|S_a-S_b\|^{\frac1{9}} \\
&\quad +\{\|a\|_{W^{1,\infty}}+\|b\|_{W^{1,\infty}}\}^{\frac{10}{9}}\|S_a-S_b\|^{\frac8{9}}.
\end{align*}
Taking the supremum over $x_0\in\R^3$ now yields the result.\end{proof}

\section{Proof of Theorem~\ref{T:pq}}\label{S:pq}

\begin{proof}[Proof of Theorem~\ref{T:pq}] The proof begins similarly to the proof of Theorem~\ref{T:stable}.

Let $S_p$ and $S_\ell$ denote the scattering maps corresponding to \eqref{nls} with nonlinearities $|u|^p u$ and $|u|^\ell u$, respectively, and define $\varphi_{\sigma}$ as in \eqref{def:gaussian} with $x_0=0$. We let $u,v$ denote the solutions to \eqref{nls} with nonlinearities $|u|^p u$ and $|v|^\ell v$ that scatter backward in time to $\varphi_{\sigma}$.  Arguing as in the proof of Theorem~\ref{T:stable}, we can write
\begin{align}
\iint_{\R\times\R^3}&\bigl[|e^{it\Delta}\varphi_{\sigma}|^{p+2}-|e^{it\Delta}\varphi_{\sigma}|^{\ell+2}\bigr]\,dx\,dt\label{pq-main} \\
& = i\langle S_p(\varphi_{\sigma})-S_\ell(\varphi_{\sigma}),\varphi_{\sigma}\rangle\label{pq-operators}\\
& \quad + \iint_{\R\times\R^3}\bigl[|u|^p u - |e^{it\Delta}\varphi_{\sigma}|^pe^{it\Delta}\varphi_{\sigma}\bigr]\overline{e^{it\Delta}\varphi_{\sigma}}\,dx\,dt \label{pq-error1} \\
& \quad + \iint_{\R\times\R^3}\bigl[|v|^\ell v-|e^{it\Delta}\varphi_{\sigma}|^\ell e^{it\Delta}\varphi_{\sigma}\bigr]\overline{e^{it\Delta}\varphi_{\sigma}}\,dx\,dt. \label{pq-error2} 
\end{align}
The estimates of \eqref{error1}--\eqref{error2} in the proof of Theorem~\ref{T:stable} apply to \eqref{pq-error1}--\eqref{pq-error2}, so that
\[
|\eqref{pq-error1}|+|\eqref{pq-error2}| \lesssim \sigma^7. 
\]
Similarly, estimating as we did for \eqref{mainRHS}, we have
\[
|\eqref{pq-operators}| \lesssim \sigma^2\|S_p-S_{\ell}\| 
\]

For \eqref{pq-main}, we use Proposition~\ref{P:approx-id}  with $d=3$ and $s=\frac{1}{4}$, which shows that
\[
\bigl| \eqref{pq-main} - \sigma^5[\lambda(p)-\lambda(\ell)] | \lesssim \sigma^{5+\frac14},
\]
where we abbreviate $\lambda(3,p)$ and $\lambda(3,\ell)$ by $\lambda(p)$ and $\lambda(\ell)$, respectively. It follows that
\begin{align*}
|\lambda(p)-\lambda(\ell)| & \lesssim \sigma^{-2}\|S_p-S_\ell\| + \sigma^{\frac14} + \sigma^2\\
&\lesssim \sigma^{-2}\|S_p - S_\ell\| + \sigma^{\frac14}. 
\end{align*}
Optimizing in $\sigma$ implies that
\[
|\lambda(p)-\lambda(\ell)|\lesssim \|S_p-S_\ell\|^{\frac{1}{9}},
\]
and thus the proof reduces to proving that
\begin{equation}\label{LB}
|\lambda(p)-\lambda(\ell)|\gtrsim |p-\ell|. 
\end{equation}
In fact, recalling the definition of $\lambda$ in \eqref{lambda}, a direct calculation shows that
\[
\lambda'(p) = -c\tfrac{1}{(p+2)^{\frac32}}\tfrac{\Gamma(\frac{3p}{4}-\frac12)}{\Gamma(\frac{3p}{4})}\bigl\{\tfrac{3}{2(p+2)}+\tfrac{3}{4}\bigl[\psi(\tfrac{3p}{4})-\psi(\tfrac{3p}{4}-\tfrac12)\bigr]\bigr\},
\]
where $\psi$ is the digamma function, i.e. $\psi(z) = \tfrac{\Gamma'(z)}{\Gamma(z)}$. 

By Gautschi's inequality (see e.g. \cite[Theorem~A, p. 68]{Rademacher}), we have
\[
\tfrac{\Gamma(\frac{3p}{4}-\frac12)}{\Gamma(\frac{3p}{4})}>(\tfrac{3p}{4})^{-\frac12}.
\]
Using the fact that $\psi$ is increasing on $(0,\infty)$, it follows that
\[
|\lambda'(p)|\geq \tfrac{3c}{2}(p+2)^{-\frac52}(\tfrac{3p}{4})^{-\frac12} \gtrsim 1 \qtq{uniformly for}p\in[\tfrac43,4]. 
\]
This implies \eqref{LB} and completes the proof of Theorem~\ref{T:pq}.
\end{proof}


\begin{thebibliography}{100}


\bibitem{SBUW} A. S\'a Barreto, G. Uhlmann, and Y. Wang, \emph{Inverse scattering for critical semilinear wave equations}.  Pure Appl. Anal. \textbf{4} (2022), no. 2, 191--223.

\bibitem{SBS2} A. S\'a Barreto and P. Stefanov, \emph{Recovery of a cubic non-linearity in the wave equation in the weakly non-linear regime.} Comm. Math. Phys. \textbf{392} (2022), no. 1, 25--53.

\bibitem{CarlesGallagher} R. Carles and I. Gallagher, \emph{Analyticity of the scattering operator for semilinear dispersive equations.} Comm. Math. Phys. \textbf{286} (2009), no. 3, 1181--1209.

\bibitem{ChenMurphy} G. Chen and J. Murphy, \emph{Recovery of the nonlinearity from the modified scattering map}. Preprint {\tt arXiv:arXiv:2304.01455}. 

\bibitem{EW} V. Enss and R. Weder, \emph{The geometrical approach to multidimensional inverse scattering.} J. Math. Phys. \textbf{36} (1995), no. 8, 3902--3921.

\bibitem{GinibreVelo}  J. Ginibre and G. Velo, \emph{Smoothing properties and retarded estimates for some dispersive evolution equations.} Comm. Math. Phys. \textbf{144} (1992), 163--188. 

\bibitem{HMG} C. Hogan, J. Murphy, and D. Grow, \emph{Recovery of a cubic nonlinearity for the nonlinear Schr\"odinger equation.} J. Math. Anal. Appl. \textbf{522} (2023), no. 1, Article 127016. 

\bibitem{KeelTao} M. Keel and T. Tao, \emph{Endpoint Strichartz estimates.}  Amer. J. Math. \textbf{120} (1998), no. 5, 955--980.


\bibitem{KMV} R. Killip, J. Murphy, and M. Visan, \emph{The scattering map determines the nonlinearity.} Proc. Amer. Math. Soc. \textbf{151} (2023), no. 5, 2543--2557. 


\bibitem{LLPT} M. Lassas, T. Liimatainen, L. Potenciano-Machado, T. Tyni, \emph{Uniqueness, reconstruction and stability for an inverse problem of a semi-linear wave equation.} J. Differential Equations \textbf{337} (2022), 395--435. 

\bibitem{LeeYu} Z. Lee and X. Yu, \emph{A note on recovering the nonlinearity for generalized higher-order Schr\"odinger equations.} Preprint {\tt arXiv:2303.06312}.

\bibitem{MorStr} C. S. Morawetz and W. A. Strauss, \emph{On a nonlinear scattering operator.} Comm. Pure Appl. Math. \textbf{26} (1973), 47--54.

\bibitem{Murphy} J. Murphy, \emph{Recovery of a spatially-dependent coefficient from the NLS scattering map.} Preprint {\tt arXiv:2209.07680}.

\bibitem{PauStr} B. Pausader and W. A. Strauss, \emph{Analyticity of the nonlinear scattering operator.} Discrete Contin. Dyn. Syst. \textbf{25} (2009), no. 2, 617--626.

\bibitem{Rademacher} H. Rademacher, \emph{Topics in Analytic Number Theory.} Edited by E. Grosswald, J. Lehner and M. Newman.  Die Grundlehren der mathematischen Wissenschaften, Band \textbf{169}. \emph{Springer-Verlag, New York-Heidelberg,} 1973. ix+320 pp.

\bibitem{Sasaki2} H. Sasaki, \emph{The inverse scattering problem for Schr\"odinger and Klein-Gordon equations with a nonlocal nonlinearity,} Nonlinear Analysis, Theory, Methods \& Applications \textbf{66} (2007), 1770--1781. 

\bibitem{Sasaki} H. Sasaki, \emph{Inverse scattering for the nonlinear Schr\"odinger equation with the Yukawa potential.} Comm. Partial Differential Equations \textbf{33} (2008), no. 7-9, 1175--1197. 

\bibitem{SasakiWatanabe} H. Sasaki and M. Watanabe, \emph{Uniqueness on identification of cubic convolution nonlinearity.} J. Math. Anal. Appl. \textbf{309} (2005), no. 1, 294--306. 

\bibitem{Strauss} W. A. Strauss, \emph{Nonlinear scattering theory.} In Scattering Theory in Mathematical Physics, edited by J. A. Lavita and J. P. Marchand. D. Reidel, Dordrecht, Holland/Boston, 1974, pp. 53--178. 



\bibitem{Strichartz} R. Strichartz, \emph{Restrictions of Fourier transforms to quadratic surfaces and decay of solutions of wave equations.} Duke Math. J. \textbf{44} (1977), no. 3, 705--714.


\bibitem{Watanabe0} M. Watanabe, \emph{Inverse scattering for the nonlinear Schr\"odinger equation with cubic convolution nonlinearity.} Tokyo J. Math. \textbf{24} (2001), no. 1, 59--67.

\bibitem{Watanabe} M. Watanabe, \emph{Time-dependent method for non-linear Schr\"odinger equations in inverse scattering problems.} J. Math. Anal. Appl. \textbf{459} (2018), no. 2, 932--944.

\bibitem{Weder0} R. Weder, \emph{Inverse scattering for the nonlinear Schr\"odinger equation.} Comm. Partial Differential Equations \textbf{22} (1997), no. 11-12, 2089--2103. 
 
\bibitem{Weder1} R. Weder, \emph{Inverse scattering for the non-linear Schr\"odinger equation: reconstruction of the potential and the non-linearity.} Math. Methods Appl. Sci. \textbf{24} (2001), no. 4, 245--25 

\bibitem{Weder6} R. Weder, \emph{$L^p$-$L^{p'}$ estimates for the Schr\"odinger equation on the line and inverse scattering for the nonlinear Schr\"odinger equation with a potential.} J. Funct. Anal. \textbf{170} (2000), no. 1, 37--68.

\bibitem{Weder3} R. Weder, \emph{Inverse scattering for the nonlinear Schr\"odinger equation. II. Reconstruction of the potential and the nonlinearity in the multidimensional case.} Proc. Amer. Math. Soc. \textbf{129} (2001), no. 12, 3637--3645. 

\bibitem{Weder4} R. Weder, \emph{Inverse scattering for the non-linear Schr\"odinger equation: reconstruction of the potential and the non-linearity.} Math. Methods Appl. Sci. \textbf{24} (2001), no. 4, 245--254.

\bibitem{Weder5} R. Weder, \emph{Multidimensional inverse scattering for the nonlinear Klein-Gordon equation with a potential.} J. Differential Equations \textbf{184} (2002), no. 1, 62--77. 

\bibitem{Visan} M. Visan, \emph{Dispersive Equations}, in ``Dispersive Equations and Nonlinear Waves, Oberwolfach Seminars'' \textbf{45}, Birkhauser/Springer Basel AG, Basel, 2014.






\end{thebibliography}
\end{document}